\newenvironment{red}
{\relax\color{red}}
{\hspace*{.5ex}\relax}
\newcommand{\ber}{\begin{red}}
\newcommand{\er}{\end{red}}
\newenvironment{blue}
{\relax\color{blue}}
{\hspace*{.5ex}\relax}
\newcommand{\beb}{\begin{blue}}
\newcommand{\eb}{\end{blue}}
\newenvironment{green}
{\relax\color{green}}
{\hspace*{.5ex}\relax}
\newcommand{\bev}{\begin{green}}
\newcommand{\ev}{\end{green}}
\newenvironment{verd}
{\relax\color{magenta}}
{\hspace*{.5ex}\relax}
\newcommand{\bg}{\begin{verd}}
\newcommand{\eg}{\end{verd}}
\def\ncnode#1#2{{\kern -1pt\mathop\bigcirc\limits_{#2}
                \kern-11pt{\scriptstyle#1}\kern 4pt}}
 \def\nRnode#1#2{{\kern -0.4pt\mathop\Box\limits_{#2}
   \kern-8.6pt{\scriptstyle#1}\kern 2.3pt}}
\def\sbar#1pt{{\vrule width#1pt height3pt depth-2pt}}
\def\dbar#1pt{{\rlap{\vrule width#1pt height2pt depth-1pt}
                 \vrule width#1pt height4pt depth-3pt}}
\newtheorem{thm}{Theorem}[section]
\newtheorem{lem}[thm]{Lemma}
\newtheorem{prop}[thm]{Proposition}
\theoremstyle{definition}
\newtheorem{defn}[thm]{Definition}
\newtheorem{ex}[thm]{Example}
\numberwithin{equation}{section}
\author[S. Kim and D.-I. Lee]{Sungsoon Kim$^1$ and Dong-il Lee$^{2,*}$}
\address{LAMFA CNRS UMR 7352, Universit\'e de Picardie JV-Math\'ematiques\\
80039 Amiens, France, (membre ext. IMJ-PRG Univ. Paris 7)}
\email{%yjsskim@gmail.com,
sungsoon.kim@u-picardie.fr}
\thanks{$^1$ She is grateful to KIAS for its hospitality during this work.}
\address{Department of Mathematics, Seoul Women's University\\
Seoul 01797, Korea}
\email{dilee@swu.ac.kr}
\thanks{$^2$ This research was supported by NRF Grant \# 2014R1A1A2054811 and
a research grant from Seoul Women's University(2017).}
\thanks{* %This work was done while the author was .
Corresponding author}
\keywords{Temperley-Lieb algebra, \GS basis%, Coxeter group
}
\subjclass[2010]{Primary 20F55, Secondary 05E15, 16Z05}
\newcommand{\F}{\mathbb{F}}
\newcommand{\C}{\mathbb{C}}
\newcommand{\G}{Gr\"{o}bner }
\newcommand{\GS}{Gr\"{o}bner-Shirshov }
\newcommand{\tl}{\mathcal{T}}
\begin{document}

\title[Temperley-Lieb Algebras]{Gr\"{o}bner-Shirshov bases for %generalized
Temperley-Lieb algebras of types $B$ and $D$
%\\[5pt]
%Bases de Gr\"{o}bner-Shirshov pour les alg\`ebres de Temperley-Lieb de types $B$ et $D$
}

\begin{abstract}
%From the presentation of the Temperley-Lieb algebra of types $B$ and $D$,
For Temperley-Lieb algebras of types $B$ and $D$,
we construct their \GS bases and the corresponding standard monomials%,
%which give another combinatorial interpretation for the fully commutative elements
. %We %%compute the number of standard monomials, which is the dimension of the TL algebra, and
%observe some properties of these standard monomials belonging to the Temperley-Lieb algebra.

%\\[5pt]
%{\sc R\'esum\'e}. On construit les bases de Gr\"{o}bner-Shirshov pour les alg\`ebres de Temperley-Lieb de types $B$ et $D$
%et leurs mon\^{o}mes standars correspondants.
%
\end{abstract}
\maketitle

\vskip-1cm
\section{Introduction}

\bigskip

Originally, the Temperley-Lieb algebra appears in the context of statistical mechanics \cite{TemperleyLieb},
and later its structure has been studied in connection with knot theory,
where it is known to be a quotient of the Hecke algebra of type $A$ \cite{Jones1987}.
\smallskip

%For any Coxeter group of type $X$, Graham showed that the associated Temperley-Lieb algebra $TL(X)$ has a basis
%consisting of fully commutative elements \cite{Graham}.
%%, following the terminology of Stembridge \cite{Stembridge96}.

Our method for understanding the structure of %Coxeter groups
Temperley-Lieb algebras is from the noncommutative \G basis theory,
 called the {\em \GS basis theory}, which provides a powerful tool for understanding
the structure of (non)associative algebras and their
representations, especially in computational aspects.
With the ever-growing power of computers,
it is now viewed as a universal engine behind algebraic or symbolic computation.
\smallskip

%\smallskip

The main interest of the notion of \GS bases stems from Shirshov's Composition Lemma and his algorithm \cite{Sh, Shirshov_Selected} for Lie algebras and
independently from Buchberger's algorithm \cite{B65} of computing \G bases for commutative algebras.
%Similar ideas also can be found in the work of Hironaka \cite{Hironaka} %for topological local algebras
%and Janet \cite{Janet}% for modules over Weyl algebras
%.
In \cite{Bo}, Bokut applied Shirshov's method to associative algebras, and
Bergman mentioned the diamond lemma for ring theory \cite{Be}.
%The composition lemma for Lie superalgebras was proved by Mikhalev \cite{Mikhalev89}.

\smallskip

The \GS bases for Coxeter groups of classical and exceptional types
were completely determined in \cite{BokutShiao, DenisLee, LeeDI5, Svechkarenko}.
The cases for Hecke algebras and Temperley-Lieb algebras of type $A$
%as well as for Ariki-Koike algebras %\beb of type $G(r,1,n)$\eb
were calculated
%constructed
in \cite{KLLO%, KLLO1%, LeeDI3
}.

In this paper, we deal with Temperley-Lieb algebras of types $B$ and $D$, extending the result in \cite[\S6]{KLLO}.
By completing the relations coming from a presentation of the Temperley-Lieb algebra,
we compute its \GS basis to obtain the corresponding set of standard monomials.
%The explicit multiplication table between the monomials follows naturally.
%
%We remark that the set of standard monomials we constructed as a \GS basis
%corresponds to that of fully commutative elements which indexes a basis of the Temperley-Lieb algebra \cite{Fan97, Graham, Stembridge97}.

\smallskip

Our approach gives the following interesting properties~:

1. The number of standard monomials is the dimension of the Temperley-Lieb algebra, computed in the work
of Fan and Stembridge \cite{Fan97, %Graham,
Stembridge97}.

2. The standard monomials reside themselves inside the Temperley-Lieb algebra. In this sense, the standard monomials could be more interesting than the fully commutative elements.

3. The product of two
standard monomials becomes a standard monomial up to a scalar multiple. We give some examples for each type in
the following sections.

%\\[.2pt]
%\smallskip
%
%The plan of this paper goes as follows~:
%
%%\smallskip
%
%In section 2, we give basic definitions and notations on {\em \GS bases}. We also introduce some
%combinatorial concepts and notations for computing the dimension of the Temperley-Lieb algebra. More details are given in the section ``Annexe''.
%
%%\smallskip
%
%In section 3, we recall the results  for type $A_{n-1}$.
%
%%\smallskip
%
%In section 4, for any Temperley-Lieb algebra of type $B_n$, we give a {\em \GS basis} with the corresponding set of standard
%monomials. We introduce some combinatorial concepts to compute the dimension of the Temperley-Lieb algebra of type $B_n$.
%
%%\smallskip
%
%In section 5, we do the same work for the Temperley-Lieb algebra of type $D_n$, giving a {\em \GS basis} as well as
%counting its cardinality with two different combinatorial concepts.
%
%In section ``Annexe'', we give combinatorial concepts and tools in more detail.
%%\smallskip

%\section{Basic Definitions and Notations}
\section{Preliminaries}

In this section, we recall a basic theory of {\em \GS bases} for associative algebras so as to make the paper self-contained. There will be some properties listed without proofs which are well-known and necessary for this paper.
% and the readers are invited to see the references for further detailed explanations.
%
%One remark is that a more advanced theory of {\em \GS pairs} for representation theory
%are given in \cite{KL, KL1} with computational results on Lie algebras and Hecke algebras \cite{KLLO, KLLO1, KLLP, LeeDI, LeeDI1}.
\\

Let $X$ be a set and let $\langle X\rangle$ be the free monoid of associative
words on $X$. We denote the empty word by $1$ and the {\em length} %\beb in the usual sense?\eb
(or {\em degree}) of a word $u$ by $l(u)$.
We define a total-order $<$ on
$\langle X\rangle$, called a {\em monomial order} as follows~; %\\[.1pt]
%\smallskip

\begin{center}{\it if $x<y$ implies $axb<ayb$
for all $a,b\in \langle X\rangle$.} \\%[.1pt]
\end{center}
\smallskip

Fix a monomial order $<$ on $\langle X\rangle$ and let $\mathbb{F}\langle X\rangle$ be the
free associative algebra generated by $X$ over a field $\mathbb{F}$.
Given a nonzero element $p \in \mathbb{F}\langle X\rangle$, we denote by
$\overline{p}$ the monomial (called the {\em leading
monomial}) appearing in $p$, which is maximal under the ordering $<$. Thus $p = \alpha
\overline{p} + \sum \beta _i w_i $ with $\alpha , \beta _i \in
\mathbb{F}$, $ w_i \in \langle X\rangle$, $\alpha \neq 0$ and $w_i <
\overline{p}$ for all $i$. If $\alpha =1$, $p$ is said to be {\em monic}. \smallskip %\\[.1pt]

Let $S$ be a subset of monic elements in
$\mathbb{F}\langle X\rangle$, and let $I$ be the two-sided ideal of $\mathbb{F}\langle X\rangle$
generated by $S$. Then we say
that the algebra $A= \mathbb{F}\langle X\rangle /I$ is {\em defined by $S$}.

\begin{defn}
Given a subset  $S$  of monic elements in
$\mathbb{F}\langle X\rangle$, a monomial $u \in \langle X\rangle$ is said to be {\em $S$-standard} (or {\em $S$-reduced})
if $u \neq a\overline{s}b$
 for any $s \in S$ and $a, b \in \langle X\rangle$. Otherwise, the monomial $u$ is said to be {\em $S$-reducible}.
\end{defn}

\begin{lem}[\cite{Be, Bo}]\label{division}
Every $p \in \mathbb{F}\langle X\rangle$ can be expressed as
\begin{equation} \label{equ-1}
p = \sum \alpha_i a_is_ib_i + \sum \beta_j
u_j,
\end{equation}
where $\alpha_i, \beta_j \in \mathbb{F}$, $a_i, b_i, u_j \in \langle X\rangle$, $s_i \in S$, $a_i \overline{s_i} b_i \leq
\overline{p}$, $u_j \leq
\overline{p}$ and $u_j$ are $S$-standard.
\end{lem}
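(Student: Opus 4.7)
The plan is to prove the statement by well-founded induction on the leading monomial $\overline{p}$ with respect to the monomial order $<$ (which, as is standard in the \GS setting, we take to be a well-order on $\langle X \rangle$). If $p = 0$ the statement is vacuous, so assume $p \neq 0$ and write $p = \alpha \overline{p} + p'$, where $\alpha \in \mathbb{F}^\times$ is the leading coefficient and every monomial in $p'$ is strictly smaller than $\overline{p}$.

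I would split into two cases according to whether $\overline{p}$ is $S$-standard. If $\overline{p}$ is $S$-standard, then the term $\alpha \overline{p}$ is itself an admissible $\beta_j u_j$ contribution with $u_j = \overline{p} \leq \overline{p}$; I apply the induction hypothesis to $p'$, whose leading monomial is strictly smaller, and concatenate the resulting expression with $\alpha \overline{p}$.

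If instead $\overline{p}$ is $S$-reducible, write $\overline{p} = a \overline{s} b$ for some $s \in S$ and $a, b \in \langle X \rangle$. I would then consider the polynomial $p'' = p - \alpha a s b$. The key calculation is that, because $s$ is monic and $<$ satisfies $w < \overline{s} \Rightarrow awb < a\overline{s}b$, the leading monomial of $asb$ is exactly $a \overline{s} b = \overline{p}$, so the leading terms cancel and $\overline{p''} < \overline{p}$. The induction hypothesis applied to $p''$ yields an expression of the required form, and adding back the term $\alpha a s b$ (which already satisfies $a \overline{s} b = \overline{p} \leq \overline{p}$) completes the case.

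The only genuinely delicate point is the identity $\overline{asb} = a\overline{s}b$ used in the reducible case, which relies on the two-sided compatibility of the monomial order and forces strict decrease of the leading monomial after each reduction step. Once that is established, well-foundedness of $<$ guarantees that the recursion terminates, and the collected coefficients and monomials satisfy $a_i \overline{s_i} b_i \leq \overline{p}$ and $u_j \leq \overline{p}$ by construction, since every recursive contribution came from a polynomial with leading monomial at most $\overline{p}$.
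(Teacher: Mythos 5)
Your proof is correct and is essentially the canonical argument: the paper itself gives no proof of this lemma (it is quoted from Bergman and Bokut), and the induction on the leading monomial with the two cases ($\overline{p}$ $S$-standard versus $\overline{p}=a\overline{s}b$) is exactly the standard division/reduction argument from those references. You are also right to flag the one hypothesis the paper's definition of monomial order omits but silently uses, namely that $<$ is a well-order on $\langle X\rangle$ (true for the degree-lexicographic orders actually employed later), since without it the reduction process need not terminate.
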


\noindent {\it Remark}.
%The proof of the above lemma actually gives us an
%algorithm of writing an element $p$ of $\mathbb{F}\langle X\rangle$ in the form
%(\ref{equ-1}). It may be considered as a {\em division algorithm}.
The term $\sum \beta_j u_j$ in the expression (\ref{equ-1}) is
called a {\em normal form} (or a {\em remainder}) of $p$ with
respect to the subset $S$ (and with respect to the monomial order
$<$). In general, a normal form is not unique.
\\

As an immediate corollary of Lemma \ref{division}, we obtain:

\begin{prop} %{\rm (\cite{KL, KL1})}
The set of $S$-standard monomials spans the algebra
$A=\mathbb{F}\langle X\rangle/I$ defined by the subset $S$, as a vector space over $\mathbb{F}$.
\end{prop}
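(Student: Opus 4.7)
The plan is to apply Lemma~\ref{division} directly: it provides, for any element of the free algebra, a decomposition into a piece that lies in the ideal $I$ and a piece that is an $\mathbb{F}$-linear combination of $S$-standard monomials. Passing to the quotient $A = \mathbb{F}\langle X\rangle/I$ then kills the first piece, leaving the second as a spanning expression for the image of the original element.

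More concretely, I would first pick an arbitrary element of $A$ and lift it to a representative $p \in \mathbb{F}\langle X\rangle$. Lemma~\ref{division} then yields
$$p = \sum \alpha_i a_i s_i b_i + \sum \beta_j u_j,$$
with $s_i \in S$, $a_i, b_i \in \langle X\rangle$, $\alpha_i, \beta_j \in \mathbb{F}$, and each $u_j$ an $S$-standard monomial. Since $I$ is by definition the two-sided ideal generated by $S$, every summand $a_i s_i b_i$ belongs to $I$, so the first sum vanishes modulo $I$. Projecting to $A$, the class of $p$ therefore equals $\sum \beta_j \widetilde{u}_j$, where $\widetilde{u}_j$ denotes the image of $u_j$ in $A$. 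This exhibits an arbitrary element of $A$ as an $\mathbb{F}$-linear combination of images of $S$-standard monomials, which is exactly the spanning claim.

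I expect no substantive obstacle here: the proposition is explicitly flagged in the text as an immediate corollary of Lemma~\ref{division}, and the argument is little more than bookkeeping, combining the decomposition supplied by that lemma with the definition of the quotient algebra $\mathbb{F}\langle X\rangle/I$. No monomial-order considerations or compositional arguments (of the kind that will arise later when establishing that a given $S$ is actually a \GS basis, i.e.\ that these spanning monomials are linearly independent) enter at this stage.
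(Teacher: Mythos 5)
Your proposal is correct and matches the paper exactly: the paper presents this proposition as an immediate corollary of Lemma~\ref{division}, and your argument of lifting an element of $A$, applying the lemma, and observing that the $\sum \alpha_i a_i s_i b_i$ part dies in the quotient is precisely the intended (and only needed) reasoning.
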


Let $p$ and $q$ be monic elements in $\F\langle X\rangle$ with leading
monomials $\overline{p}$ and $\overline{q}$. We define the {\em
composition} of $p$ and $q$ as follows.

\begin{defn}
(a) If there exist $a$ and $b$ in $\langle X\rangle$ such that
$\overline{p}a = b\overline{q} = w$ with $l(\overline{p}) > l(b)$,
then the {\em composition of intersection} is defined to be $(p,q)_w = pa -bq$.
%Furthermore, if $a=1$, the composition $(p,q)_w$ is called {\em right-justified}.

(b) If there exist $a$ and $b$ in $\langle X\rangle$ such that $a \neq 1$,
$a\overline{p}b=\overline{q}=w$, then the {\em composition of
inclusion} is defined to be $(p,q)_{a,b} = apb - q$.
\end{defn}

%\noindent {\it Remark}.
%The composition of inclusion has an ambiguity if we denote it by
%$(p,q)_w$ where $w=a\overline{p}b=\overline{q}$. For example, if
%$p=x_2+x_3$ and $q=x_1x_2^2x_3$, then $(p,q)_w$ may be
%$x_1px_2x_3-q$ or $x_1x_2px_3-q$. So we should specify the monomials $a$ and $b$.
%\\

Let $p, q \in \F\langle X\rangle$ and $w \in \langle X\rangle$. We define the {\em
congruence relation} on $\F\langle X\rangle$ as follows: $p \equiv q
\mod (S; w)$ if and only if $p -q = \sum \alpha_i a_i s_i b_i$, where $\alpha_i \in \mathbb{F}$,
$a_i, b_i \in \langle X\rangle$, $s_i \in S$, $a_i
\overline{s_i} b_i < w$.

\begin{defn}
A subset $S$ of monic elements in $\mathbb{F}\langle X\rangle$
is said to be {\em closed under composition} if
\begin{enumerate}
\item[] $(p,q)_w \equiv 0 \mod (S;w)$ and $(p,q)_{a,b} \equiv
0 \mod (S;w)$ for all $p,q \in S$, $a,b \in \langle X\rangle$ whenever the
compositions $(p,q)_w$ and $(p,q)_{a,b}$ are defined.
\end{enumerate}
%If $T=\emptyset$, we will simply say that $S$ is closed under
%composition.
\end{defn}

%The following theorem is the main result of \cite{KL, KL1}, which is
%a generalization of Shirshov's Composition Lemma (for Lie algebras
%and associative algebras) to the representations of associative
%algebras.\\

%\begin{thm}[\cite{KL}]
%Let $(S,T)$ be a pair of subsets of
%monic elements in the free associative algebra $\F\langle X\rangle$
%generated by $X$, let $A=\F\langle X\rangle/J$ be the associative
%algebra defined by $S$, and let $M=A/I$ be the left $A$-module
%defined by $(S,T)$. If $(S,T)$ is closed under composition and the
%image of $p \in \mathbb{F}\langle X\rangle$ is trivial in $M$, then the word
%$\overline{p}$ is $(S, T)$-reducible.
%\end{thm}
%As a corollary, we obtain:

The following theorem is a main tool for our results in the subsequent sections.
%Its representation-theoretic analogue has been proved in \cite{KL, KL1}.

\begin{thm}[\cite{Be, Bo}]
\label{cor-1}
Let $S$ be a subset of
monic elements in $\F\langle X\rangle$. Then the following conditions
are equivalent\,{\rm :}
\begin{enumerate}
\item [{\rm (a)}] $S$ is closed under composition.
\item [{\rm (b)}] For each $p \in \mathbb{F}\langle X\rangle$, a normal form of $p$ with respect to $S$ is unique.
\item [{\rm (c)}]
The set of $S$-standard monomials forms a linear basis of the
algebra $A=\mathbb{F}\langle X\rangle/I$ defined by $S$.
\end{enumerate}
\end{thm}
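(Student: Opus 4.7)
The plan is to establish the three equivalent statements via the chain (a) $\Rightarrow$ (b) $\Rightarrow$ (c) $\Rightarrow$ (a), isolating the main technical content in the first arrow.

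The easier implications are largely bookkeeping. For (c) $\Rightarrow$ (a), observe that any composition $(p,q)_w$ or $(p,q)_{a,b}$ lies in the ideal $I$ and has leading monomial strictly less than $w$, because the leading terms of the two summands cancel by construction. If the $S$-standard monomials form a basis of $A$, then every element of $I$ has remainder $0$ under the algorithm of Lemma \ref{division}; in particular each such composition admits a representation $\sum \alpha_k a_k s_k b_k$ with every $a_k \overline{s_k} b_k \leq$ the leading monomial of the composition $< w$, which is exactly the defining congruence $\equiv 0 \mod (S;w)$. For (b) $\Rightarrow$ (c): by the preceding Proposition, $S$-standard monomials already span $A$, so only their linear independence is at stake. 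Given a standard sum $\sum \beta_j u_j \in I$, one exhibits two representations of it in the sense of Lemma \ref{division} whose remainders are, respectively, $\sum \beta_j u_j$ itself (via empty $I$-sum) and $0$ (via the algorithm, which reduces any element of $I$ down to a standard remainder, after bringing the unbounded $I$-sum into bounded form). Uniqueness of normal forms then forces $\sum \beta_j u_j = 0$.

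The crucial implication is (a) $\Rightarrow$ (b), proved by a Newman/diamond confluence argument, with induction on the monomial order (assumed to be a well-ordering). Suppose some $p$ admitted two representations in the form of Lemma \ref{division} with distinct standard remainders $r \neq r'$, and let $w$ be the largest leading monomial $a_i \overline{s_i} b_i$ or $a'_j \overline{s'_j} b'_j$ appearing on either side. The goal is to modify one representation to strictly reduce $w$, so induction can close. The reduction comes from analyzing two summands $a_i s_i b_i$ and $a'_j s'_j b'_j$ with $a_i \overline{s_i} b_i = a'_j \overline{s'_j} b'_j = w$: the occurrences of $\overline{s_i}$ and $\overline{s'_j}$ inside the common word $w$ are either (i) disjoint, (ii) properly nested, or (iii) properly overlapping. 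Case (i) is disposed of by rewriting the two summands independently. In (ii) and (iii), the difference $a_i s_i b_i - a'_j s'_j b'_j$, after factoring out the common prefix and suffix in $w$, is precisely a composition of inclusion or intersection, so by hypothesis (a) it equals $\sum \alpha_k a_k s_k b_k$ with every $a_k \overline{s_k} b_k < w$. Substituting this expression back strictly lowers the maximum.

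The main obstacle is the combinatorial bookkeeping of this final step: one must verify that configurations (i)--(iii) exhaust the ways two leading monomials can co-occupy a common word, that each substitution genuinely drops $w$ rather than merely relabeling the top terms, and that the resulting representations continue to meet the bounds required by Lemma \ref{division}. Once this case analysis is completed, the induction terminates by the well-ordering of the monomial order, and uniqueness of normal forms follows.
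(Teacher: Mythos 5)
The paper does not prove this theorem: it is quoted from Bergman and Bokut (the ``diamond lemma'' / Composition--Diamond Lemma), and the authors explicitly state that such well-known results are listed without proof. Your outline is essentially the standard argument from those references, and the cycle (a) $\Rightarrow$ (b) $\Rightarrow$ (c) $\Rightarrow$ (a) with the confluence induction concentrated in (a) $\Rightarrow$ (b) is the right architecture; the three-way case split (disjoint, included, overlapping occurrences of $\overline{s_i}$ and $\overline{s'_j}$ in $w$) does exhaust the possibilities and matches exactly the two kinds of composition defined in the paper plus the trivially resolvable disjoint case. Two points deserve more care than your sketch gives them. First, in (b) $\Rightarrow$ (c) the passage from ``each $a_ks_kb_k$ has normal form $0$'' to ``their sum has normal form $0$'' is not automatic, since normal forms are not a priori additive; the clean route is to observe that uniqueness makes the normal-form map a well-defined linear projection onto the span of standard monomials, which then kills every element of $I$ while fixing any standard linear combination. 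Second, the whole induction in (a) $\Rightarrow$ (b) requires the monomial order to be a well-order (and, in the disjoint case, one must track that the multiset of summands with top term equal to $w$ strictly decreases, not merely that individual terms drop); the paper's definition of monomial order only demands compatibility with multiplication, though the degree-lexicographic orders actually used are well-orders on a finite alphabet. Neither issue is a genuine error, but both are exactly where a fully written-out proof would have to do real work.
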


\begin{defn}
A subset $S$ of monic elements in $\mathbb{F}\langle X\rangle$ is a
{\em \GS basis} if $S$ satisfies one of the equivalent
conditions in Theorem \ref{cor-1}. In this case, we say that
$S$ is a {\em \GS basis} for the algebra $A$ defined by $S$.
\end{defn}

\section{Review of results for the Temperley-Lieb algebra of type $A_{n-1}$}

First, we review the results on Temperley-Lieb algebras $\tl(A_{n-1})$ $(n\ge 2)$% and Hecke algebras $\h(B_n)$ $(n\ge 2)$
.
Define $\tl(A_{n-1})$ to be the associative algebra over the complex field $\C$, generated by $X=\{E_1,E_2, \ldots, E_{n-1}\}$
with defining relations:
\begin{eqnarray}\label{relation_tlA}
& E_i^2=\delta E_i &\mbox{ for } 1\le i\le n-1, \nonumber\\
R_{\tl(A_{n-1})}:& E_iE_j = E_jE_i &\mbox{ for } i> j+1 \quad \mbox{(commutative relations)},\nonumber\\
& E_iE_jE_i=E_i &\mbox{ for } j=i\pm 1, \nonumber
\end{eqnarray}
where $\delta\in\C$ is a parameter.
%We call the first and second relations to be the quadratic and commutative relations, respectively.
Our monomial order $<$ is taken to be the degree-lexicographic order with $$
E_1<E_2<\cdots<E_{n-1}.$$ We write $E_{i,j}=E_iE_{i-1}\cdots E_j$ for $i\ge j$ (hence $E_{i,i}=E_i$).
By convention $E_{i,i+1}=1$ for $i\ge 1$.

\begin{prop}{\rm (\cite[Proposition 6.2]{KLLO})}
The Temperley-Lieb algebra $\tl(A_{n-1})$ has a \GS basis as follows:
\begin{eqnarray}\label{relation_tlA_GS}
& E_i^2-\delta E_i &\mbox{ for } 1\le i\le n-1, \nonumber\\
\widehat{R}_{\tl(A_{n-1})}:& E_iE_j - E_jE_i &\mbox{ for } i> j+1, \\
& E_{i,j}E_i-E_{i-2,j}E_i &\mbox{ for } i>j, \nonumber\\
& E_jE_{i,j}-E_jE_{i,j+2} &\mbox{ for } i>j. \nonumber
\end{eqnarray}
The corresponding $\widehat{R}_{\tl(A_{n-1})}$-standard monomials are of the form
\begin{equation}\label{monomial_tlA}
E_{i_1,j_1}E_{i_2,j_2}\cdots E_{i_p,j_p}\quad (0\le p\le n-1)
\end{equation}
where $$\begin{aligned}
&1\le i_1<i_2<\cdots <i_p\le n-1,\quad
1\le j_1<j_2<\cdots <j_p\le n-1,\\
&i_1\ge j_1,\ i_2\ge j_2,\ \ldots,\ i_p\ge j_p
\end{aligned}$$
(the case of $p=0$ is the monomial $1$).
We denote the set of $\widehat{R}_{\tl(A_{n-1})}$-standard monomials by $M_{\tl(A_{n-1})}$ and  the number $|M_{\tl(A_{n-1})}|$ of $\widehat{R}_{\tl(A_{n-1})}$-standard monomials is the $n^{\text{th}}$ Catalan number,
$$
C_n := \frac{1}{n+1} {2n\choose n}.$$
\end{prop}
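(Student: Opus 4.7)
The plan is to apply Theorem \ref{cor-1}: it suffices to prove that $\widehat{R}_{\tl(A_{n-1})}$ generates the same two-sided ideal of $\C\langle X\rangle$ as the defining set $R_{\tl(A_{n-1})}$ and is closed under composition. For the first point, I would verify by induction on $i-j$ that the longer relations $E_{i,j}E_i - E_{i-2,j}E_i$ and $E_jE_{i,j} - E_jE_{i,j+2}$ (for $i>j$) lie in the ideal generated by $R_{\tl(A_{n-1})}$: the case $i=j+1$ is exactly $E_iE_jE_i = E_i$ (using the convention $E_{i-2,j}=1$ when $i-2<j$), and the inductive step factors $E_{i,j}E_i = E_i \cdot E_{i-1,j}E_i$, commutes $E_i$ with $E_k$ for $k\leq j<i-1$, and invokes the cubic relation at $(i,i-1)$. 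Once this is done, the algebra defined by $\widehat{R}_{\tl(A_{n-1})}$ coincides with $\tl(A_{n-1})$.

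The main obstacle is closure under composition. The leading monomials are $E_i^2$, $E_iE_j$ for $i>j+1$, $E_{i,j}E_i$ for $i>j$, and $E_jE_{i,j}$ for $i>j$; I would enumerate the pairs of these that overlap as a subword. Representative cases include: (i) two commutation relations meeting at $E_iE_jE_k$ with $i>j+1>k+2$, where both reductions give $E_kE_jE_i$; (ii) a long relation meeting $E_i^2$, where the quadratic rewrite collapses one end to a scalar multiple of a shorter relation; (iii) a long relation overlapping with a commutation, handled by pushing the stray generator through the descending chain; and (iv) two long relations overlapping (for example, $E_{i,j}E_i$ ending in the tail of $E_jE_{i',j}$, or two nested $E_{i,j}E_i$ patterns), which is the technically richest case and where one must repeatedly combine the cubic and commutation relations to bring both sides to a common normal form. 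Each such composition only produces words of bounded degree, so the verification is a finite, if lengthy, symbolic check.

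Once closure is established, Theorem \ref{cor-1}(c) identifies the $\widehat{R}_{\tl(A_{n-1})}$-standard monomials as a linear basis of $\tl(A_{n-1})$. The forbidden factors $E_a^2$ and $E_aE_b$ with $a>b+1$ force a standard word to decompose into maximal strictly descending runs $E_{i_k,j_k}=E_{i_k}E_{i_k-1}\cdots E_{j_k}$; the long leading terms $E_{i,j}E_i$ and $E_jE_{i,j}$ then rule out $i_{k+1}\leq i_k$ and $j_{k+1}\leq j_k$ respectively, since either collapse would reintroduce a factor of the form $E_{m,j_k}E_m$ or $E_{j_k}E_{i_{k+1},j_k}$ inside the concatenation. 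This yields precisely the monomials of \eqref{monomial_tlA}, and a standard bijective argument (with Dyck paths of semilength $n$, or equivalently with pairs of non-crossing matchings) gives the enumeration $C_n = \frac{1}{n+1}\binom{2n}{n}$.
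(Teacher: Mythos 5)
Your plan is sound and would prove the proposition, but it is a genuinely different route from the one this paper (and its companion arguments) takes. The paper does not prove this statement at all --- it imports it verbatim from \cite[Proposition 6.2]{KLLO} --- and, more tellingly, in its own proofs of the analogous theorems for $\tl(B_n)$ and $\tl(D_n)$ it deliberately avoids the step you identify as ``the main obstacle.'' The paper's strategy is: (i) derive the extra relations of $\widehat{R}_{\tl(A_{n-1})}$ inside the algebra (as you do, correctly); (ii) observe that the standard monomials \emph{always} span $A=\mathbb{F}\langle X\rangle/I$ by the Proposition following Lemma \ref{division}; (iii) count the standard monomials and match the count against the independently known dimension of the Temperley--Lieb algebra (the number of fully commutative elements, from \cite{Fan97, Stembridge97}), which forces linear independence; and then (iv) invoke the equivalence (c)$\Rightarrow$(a) of Theorem \ref{cor-1} to get closure under composition \emph{for free}. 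Your proposal runs the equivalence in the opposite direction, (a)$\Rightarrow$(c), which is logically fine and has the advantage of not presupposing the dimension formula --- but it concentrates all the work in the composition checks, and your case (iv) (overlaps between two long relations $E_{i,j}E_i$ and $E_jE_{i',j}$) is precisely where the paper's shortcut saves the most effort; as written you assert rather than perform these resolutions, so to be complete you would need to carry out that finite but intricate case analysis. Your identification of the standard monomials from the leading terms (maximal descending runs with $i_1<\cdots<i_p$, $j_1<\cdots<j_p$, $i_k\ge j_k$) and the Catalan enumeration via lattice paths below the diagonal agree with \eqref{monomial_tlA} and with the path model the paper sets up explicitly in its type $B$ section. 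One small slip: in your inductive derivation of $E_{i,j}E_i-E_{i-2,j}E_i$, the generators you commute the trailing $E_i$ past are $E_k$ for $j\le k\le i-2$ (the condition being $i>k+1$), not ``$k\le j<i-1$''; the computation $E_{i,j}E_i=E_iE_{i-1}E_iE_{i-2,j}=E_iE_{i-2,j}=E_{i-2,j}E_i$ needs no induction at all.
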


\smallskip

\begin{ex} Note that $|M_{\tl(A_3)}|=C_4=14$. Explicitly,
the $\widehat{R}_{\tl(A_3)}$-standard monomials are as follows: \begin{eqnarray*}
&1, E_1, E_{2,1}, E_2, E_1E_2, E_{3,1}, E_{3,2}, E_3, \\
&E_1E_{3,2}, E_1E_3, E_{2,1}E_{3,2}, E_{2,1}E_3, E_2E_3, E_1E_2E_3.
\end{eqnarray*}
\end{ex}

\noindent {\it Remark}. (1) One interesting point of considering standard monomials is that
the product of two standard monomials becomes a standard monomial up to a scalar multiple.
As an example,
if we multiply $E_1E_2$ by $E_{2,1}E_{3,2}$ in the previous example then we obtain $$
(E_1E_2)(E_{2,1}E_{3,2})=\delta E_1 E_2E_1 E_{3,2}=\delta E_1 E_{3,2},$$
a multiple of another standard monomial $E_1E_{3,2}$.
For another one, the multiplication of $E_{2,1}$ by $E_{3,1}$ leads us to have
$$
E_{2,1}E_{3,1}=E_2 (E_1E_{3,1})=E_2 (E_1E_3)=E_{2,1}E_3
$$ by the \GS basis (\ref{relation_tlA_GS}).

\smallskip

(2) One can also notice that the number of standard monomials equals the number of fully commutative elements,
which is the dimension of the Temperley-Lieb algebra of type $A$.
\\

In the following sections, keeping the same strategy and notations, we %want to
give analogous results for the Temperley-Lieb algebra of types $B$ and $D$.

%\newpage
\section{\GS bases for the Temperley-Lieb algebras of type $B_n$}

%Since the Hecke algebra $\h(B_n)$ is the deformed algebra of the group $G(2,1,n)$,
%it is a special case of the Ariki-Koike algebra $\h(G(r,1,n))$, whose \GS basis was calculated in %\cite[Proposition 2.3]{KLLO1}.
%\cite{KLLO1}.

Let $\tl(B_n)$ $(n\ge 2)$ be the Temperley-Lieb algebra of type $B_n$,
that is, the associative algebra over the complex field $\C$, generated by $X=\{E_0,E_1, \ldots, E_{n-1}\}$
with defining relations:
\begin{eqnarray}\label{relation_tlB}
& E_i^2=\delta E_i &\mbox{ for } 0\le i\le n-1, \nonumber\\
R_{\tl(B_n)}:& E_iE_j = E_jE_i &\mbox{ for } i> j+1%\quad \mbox{(commutative relations)}
, \\
& E_iE_jE_i=E_i &\mbox{ for } j=i\pm 1,\ i,j>0,  \nonumber\\
& E_iE_jE_iE_j=2E_iE_j &\mbox{ for } \{i,j\}=\{0,1\},  \nonumber
\end{eqnarray}
where $\delta\in\C$ is a parameter.
%Note that $\tl(B_n)$ is a quotient of the Hecke algebra $\h(B_n)$%, and we take the Dynkin diagram reversing that in \cite{KLLO1}
. %We state here the result on Hecke algebras $\h(B_n)$ $(n\ge 2)$.
%Since %the Hecke algebra
%$\h(B_n)$ is the deformed algebra of the group $G(2,1,n)$,
%it is a special case of the Ariki-Koike algebra $\h(G(r,1,n))$, whose \GS basis was calculated in %\cite[Proposition 2.3]{KLLO1}.
%\cite{KLLO1}.

Fix our monomial order $<$ to be the degree-lexicographic order with $$
E_0< E_1<\cdots<E_{n-1}.$$ We write $E_{i,j}=E_iE_{i-1}\cdots E_j$ for $i\ge j\ge 0$, and $E^{i,j}=E_iE_{i+1}\cdots E_j$ for $i\le j$.
By convention, $E_{i,i+1}=1$ and $E^{i+1,i}=1$ for $i\ge 0$.

\begin{lem} \label{relation_tlB_Lemma}
The following relation holds in $\tl(B_n)$:
%\begin{enumerate}[{\rm (a)}]\item
%For $i>j+1\ge 1$, we have
$$
E_{i,0}E^{1,j}E_i=E_{i-2,0}E^{1,j}E_i$$
for $i>j+1\ge 1$.
%
%\item For $i\ge j\ge 2$, we have $$E_{i,1}E_0E^{1,j}E_{j-1}=E_{i,1}E_0E^{1,j-1}.$$
%
%\item For $1\le j\le i_1$ and $i_1<i_2$, we have $$
%E_{i_1,1}E_0E^{1,j}E_{i_2,j}=E_{i_1,1}E_0E^{1,j}E_{i_2,j+2}.$$
%\end{enumerate}
\end{lem}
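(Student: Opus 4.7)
The strategy is to bring the two copies of $E_i$ together around $E_{i-1}$ by commutation, apply the type-$A$ braid relation $E_iE_{i-1}E_i=E_i$, and then commute the surviving $E_i$ back to the right. First I would expand
\[
E_{i,0}E^{1,j}E_i = (E_iE_{i-1})(E_{i-2}\cdots E_1E_0)(E_1\cdots E_j)E_i.
\]
Because $i>j+1$ forces $j\le i-2$, every index $k$ appearing in the middle block $(E_{i-2}\cdots E_1E_0)(E_1\cdots E_j)$ satisfies $k\le i-2<i-1$. Hence the commutation relation $E_iE_k=E_kE_i$ applies throughout, and the rightmost $E_i$ can be slid leftward across the whole block until it sits directly to the right of $E_{i-1}$.

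At that point the product reads $(E_iE_{i-1}E_i)(E_{i-2}\cdots E_1E_0)(E_1\cdots E_j)$. Here I would invoke the braid relation $E_iE_{i-1}E_i=E_i$, which is available since $i\ge 2$ so that both $i$ and $i-1$ are strictly positive (thus the special $\{0,1\}$-relation of $R_{\tl(B_n)}$ plays no role). This collapses the first factor to a single $E_i$, leaving $E_i(E_{i-2}\cdots E_1E_0)(E_1\cdots E_j)$.

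Finally I would commute this lone $E_i$ back to the right through the same commuting block, yielding $(E_{i-2}\cdots E_1E_0)(E_1\cdots E_j)E_i = E_{i-2,0}E^{1,j}E_i$, which is the desired right-hand side. I do not expect a genuine obstacle: the index condition $i>j+1$ is used precisely to ensure $j\le i-2$, which is exactly what makes every commutation legitimate, and the degenerate case $j=0$ (where $E^{1,j}=1$) falls out of the same argument with an empty middle sub-block.
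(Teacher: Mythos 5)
Your argument is correct and is essentially the same as the paper's proof: both decompose $E_{i,0}$ as $E_iE_{i-1}E_{i-2,0}$, slide the rightmost $E_i$ leftward past the block of generators with indices at most $i-2$, collapse $E_iE_{i-1}E_i$ to $E_i$, and commute it back out. The only difference is presentational; the paper compresses the commutations into a single displayed chain of equalities.
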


\begin{proof} %(a)
Since $2\le i\le n-1$ and $0\le j\le i-2$, we calculate that $$
E_{i,0}E^{1,j}E_i=(E_iE_{i-1}E_i)E_{i-2,0}E^{1,j}=E_iE_{i-2,0}E^{1,j}=E_{i-2,0}E^{1,j}E_i$$
by the commutative relations and $E_iE_{i-1}E_i=E_i$.
%
%(b) From the defining relations, we get that $$\begin{aligned}
%E_{i_1,1}E_0E^{1,j}E_{i_2,j}&=E_{i_1,1}E_0E^{1,j-1}E_{i_2,j+2}(E_jE_{j+1}E_j)\\
%&=E_{i_1,1}E_0E^{1,j-1}E_{i_2,j+2}E_j=E_{i_1,1}E_0E^{1,j}E_{i_2,j+2}.\end{aligned}$$
\end{proof}

Let $\widehat{R}_{\tl(B_n)}$ be the set of defining relations (\ref{relation_tlB})
combined with (\ref{relation_tlA_GS}) and the relation in Lemma \ref{relation_tlB_Lemma}.
From this, we define $M_{\tl(B_n)}$ by the set of $\widehat{R}_{\tl(B_n)}$-standard monomials.
Among the monomials in $M_{\tl(B_n)}$, we consider the monomials
which are not $\widehat{R}_{\tl(A_{n-1})}$-standard.
That is, we take only $\widehat{R}_{\tl(B_n)}$-standard monomials which are not of the form (\ref{monomial_tlA}).
This set is denoted by $M_{\tl(B_n)}^0$.
%$$M_{\tl(B_n)}=M_{\tl(A_{n-1})}\amalg M_{\tl(B_n)}^0$$
Note that each monomial in $M_{\tl(B_n)}^0$ contains $E_0$. We decompose the set $M_{\tl(B_n)}^0$ into two parts as follows~:
$$M_{\tl(B_n)}^0=M_{\tl(B_n)}^{0+}\amalg M_{\tl(B_n)}^{0-}$$
where the monomials in $M_{\tl(B_n)}^{0+}$ are of the form
\begin{equation}\label{monomial_tlB+}
E_0E_{i_1,j_1}E_{i_2,j_2}\cdots E_{i_p,j_p}\quad (0\le p\le n-1)
\end{equation}
with $$\begin{aligned}
&1\le i_1<i_2<\cdots <i_p\le n-1,\quad
0\le j_1\le j_2\le \cdots \le j_p\le n-1,\\
&i_1\ge j_1,\ i_2\ge j_2,\ \ldots,\ i_p\ge j_p,\ \mbox{ and }\\
&j_k>0\ (1\le k<p)\ \mbox{ implies } j_k<j_{k+1}\end{aligned}$$
(the case of $p=0$ is the monomial $E_0$),
and the monomials in $M_{\tl(B_n)}^{0-}$ are of the form
\begin{equation}
E_{i_1,j_1}'E_{i_2,j_2}\cdots E_{i_p,j_p}\quad (1\le p\le n-1) \nonumber
\end{equation}
with $$E_{i,j}'=E_{i,0}E^{1,j}$$ and the same restriction on $i$'s and $j$'s as above.
%$$\begin{aligned}
%&1\le i_1<i_2<\cdots <i_p\le n-1,\quad
%0\le j_1\le j_2\le \cdots \le j_p\le n-1,\\
%&i_1\ge j_1,\ i_2\ge j_2,\ \ldots,\ i_p\ge j_p, \mbox{ and }\\
%&j_q>0 \mbox{ implies } j_q<j_{q+1} \mbox{ for } 1\le q<p.\end{aligned}$$
%(the case of $p=0$ is the monomial $1$).
It can be easily checked that
$M_{\tl(B_n)}^0$ is the set of $\widehat{R}_{\tl(B_n)}$-standard monomials which are not $\widehat{R}_{\tl(A_{n-1})}$-standard.
\medskip

%{\bf Path realization}:
To each monomial $E_0E_{i_1,0}E_{i_2,0}\cdots E_{i_k,0}E_{i_{k+1},j_{k+1}} %E_{i_{k+2},j_{k+2}}
\cdots E_{i_p,j_p}$ in $M_{\tl(B_n)}^{0+}$ with $j_{k+1}>0$,
we can associate
a unique path %(often called Dyck path)
$$(0,0)\rightarrow (i_1,0)\rightarrow (i_2,0)\rightarrow \cdots \rightarrow (i_k,0) \rightarrow (i_{k+1},j_{k+1})\rightarrow \cdots \rightarrow (i_p,j_p)\rightarrow (n,n).$$
Here, a path consists of moves to the east or to the north, not above the diagonal in the lattice plane.
The move from $(i,j)$ to $(i',j')$ ($i<i'$ and $j<j'$) is a concatenation of eastern moves followed by northern moves.
As an example, the monomial $E_0E_{1,0}E_{2,1}\in M_{\tl(B_3)}^{0+}$ corresponds to $$
(0,0)\rightarrow (1,0)\rightarrow (2,1)\rightarrow (3,3).$$
\medskip

Counting the number of elements in $M_{\tl(B_n)}^0$, we obtain the following theorem.

\begin{thm}\label{MainThm}
The algebra $\tl(B_n)$ has a \GS basis $\widehat{R}_{\tl(B_n)}$ with respect to our monomial order $<$:
\begin{eqnarray}\label{relation_tlB_GS}
& E_i^2-\delta E_i &\mbox{ for } 0\le i\le n-1, \nonumber\\
& E_iE_j - E_jE_i &\mbox{ for } i> j+1, \nonumber\\
\widehat{R}_{\tl(B_n)}: & E_{i,j}E_i-E_{i-2,j}E_i &\mbox{ for } i>j>0, \nonumber\\
& E_jE_{i,j}-E_jE_{i,j+2} &\mbox{ for } i>j>0. \nonumber\\
& E_iE_jE_iE_j-2E_iE_j &\mbox{ for } \{i,j\}=\{0,1\},  \nonumber\\
& E_{i,0}E^{1,j}E_i-E_{i-2,0}E^{1,j}E_i &\mbox{ for } i>j+1\ge 1. \nonumber
\end{eqnarray}
The cardinality of the set $M_{\tl(B_n)}$, i.e. the set of  $\widehat{R}_{\tl(B_n)}$-standard monomials, is

$$\dim \tl(B_n)=(n+2)C_n-1.$$
\end{thm}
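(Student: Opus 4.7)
The plan is to prove the two assertions of the theorem in sequence: first the \GS property of $\widehat{R}_{\tl(B_n)}$, then the dimension formula via a lattice-path count.

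\textbf{Step 1.} By Theorem~\ref{cor-1} it suffices to check that every composition $(p,q)_w$ or $(p,q)_{a,b}$ with $p,q\in\widehat{R}_{\tl(B_n)}$ reduces to zero modulo $(\widehat{R}_{\tl(B_n)};w)$. The leading monomials to track are $E_i^2$, $E_iE_j$ for $i>j+1$, $E_{i,j}E_i$ for $i>j>0$, $E_jE_{i,j}$ for $i>j>0$, the type-$B$ braid $E_iE_jE_iE_j$ for $\{i,j\}=\{0,1\}$, and $E_{i,0}E^{1,j}E_i$ for $i>j+1\ge1$. Compositions that involve only the first four (type-$A$) families were already resolved in the proof of \cite[Proposition~6.2]{KLLO}. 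What remains is to handle the overlaps in which at least one of $p,q$ is the type-$B$ braid or the relation from Lemma~\ref{relation_tlB_Lemma}. Each such composition is reduced to zero by exactly the toolkit already used to prove that lemma: sliding non-neighbouring $E_k$'s past each other via the commutative relations, applying $E_iE_{i\pm1}E_i=E_i$ whenever two neighbours collide, and invoking $E_0E_1E_0E_1=2E_0E_1$ whenever $E_0$ and $E_1$ meet. The main obstacle here is bookkeeping: the relation $E_{i,0}E^{1,j}E_i-E_{i-2,0}E^{1,j}E_i$ is long, so it admits many overlap shapes both with itself (for varying $(i,j)$) and with the length-four braid relation, and every case must be checked to terminate.

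\textbf{Step 2.} Once Step~1 is in place, Theorem~\ref{cor-1}(c) identifies $\dim_{\C}\tl(B_n)$ with $|M_{\tl(B_n)}|$. The set partitions as
\[
M_{\tl(B_n)} \;=\; M_{\tl(A_{n-1})} \;\amalg\; M_{\tl(B_n)}^{0+} \;\amalg\; M_{\tl(B_n)}^{0-},
\]
and the first piece has cardinality $C_n$ by the type-$A$ result recalled in Section~3. For the two $0$-pieces I would invoke the lattice-path encoding sketched just before the theorem: each monomial of $M_{\tl(B_n)}^{0+}$ is recorded by the sequence of marked corners $(i_1,j_1),\ldots,(i_p,j_p)$ of a weakly sub-diagonal path from $(0,0)$ to $(n,n)$, where each segment is an east-run followed by a north-run, and the constraint $j_k>0\Rightarrow j_k<j_{k+1}$ is exactly what makes the encoding unambiguous away from the $x$-axis while still allowing multiple marked points along it (the latter being the source of the genuinely new type-$B$ monomials). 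The same encoding applies to $M_{\tl(B_n)}^{0-}$ under the identification $E_{i_1,j_1}'=E_{i_1,0}E^{1,j_1}$, with the added requirement $p\ge1$. I would then count these marked paths by conditioning on the position of the last $x$-axis corner and using the identity $(n+1)C_n=\binom{2n}{n}$, to obtain $|M_{\tl(B_n)}^{0+}|+|M_{\tl(B_n)}^{0-}|=\binom{2n}{n}-1$. Adding $C_n$ yields $(n+2)C_n-1$, matching the Fan--Stembridge dimension \cite{Fan97,Stembridge97}. The difficulty at this step is purely combinatorial: verifying that the path encoding is indeed a bijection, then carrying out the Catalan-style count.
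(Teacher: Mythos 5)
Your Step 1 is where the proposal diverges from the paper, and as written it contains the genuine gap. You propose to establish the \GS property by directly resolving every composition among the six families of relations, but you only assert that ``each such composition is reduced to zero by exactly the toolkit already used to prove that lemma'' without exhibiting a single reduction. That assertion is precisely the content that would need to be proved: the overlaps of $E_{i,0}E^{1,j}E_i$ with itself, with $E_jE_{i,j}$, and with the braid relation $E_iE_jE_iE_j$ for $\{i,j\}=\{0,1\}$ are numerous and not obviously resolvable by inspection, and no enumeration of cases is given. The paper never performs these checks at all. Its logic runs in the opposite direction: the standard monomials for \emph{any} set of valid relations span the algebra (the Proposition following Lemma~\ref{division}), the count of $\widehat{R}_{\tl(B_n)}$-standard monomials is shown to equal the dimension $(n+2)C_n-1$ already computed by Fan and Stembridge \cite{Fan97,Stembridge97}, hence the standard monomials are forced to be a linear basis, and then the implication (c)$\Rightarrow$(a) of Theorem~\ref{cor-1} delivers closure under composition for free. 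So the dimension reference is not, as in your Step 2, a consistency check at the end --- it is the engine of the whole proof.

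Your Step 2, by contrast, is essentially the paper's argument: the paper uses the bijection $\phi:M_{\tl(B_n)}^{0+}\setminus\{E_0\}\to M_{\tl(B_n)}^{0-}$, conditions on the largest $\ell$ with $(\ell,0)$ among the plotted corners, counts sub-diagonal paths from $(\ell+1,0)$ to $(n,n)$ by the reflection/ballot formula $\binom{2n-\ell-1}{n}-\binom{2n-\ell-1}{n+1}$, and evaluates the resulting sum as $\frac{n-1}{2}C_n$ by citing \cite{FeinbergLee}, arriving at $|M_{\tl(B_n)}^{0+}|+|M_{\tl(B_n)}^{0-}|=(n+1)C_n-1=\binom{2n}{n}-1$, exactly your total. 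The fix for your proposal is therefore structural rather than computational: either reorder the logic as the paper does (count first, invoke the known dimension, conclude the \GS property from Theorem~\ref{cor-1}(c)), in which case Step 1 can be deleted, or, if you want a self-contained proof that independently re-derives $\dim\tl(B_n)$, you must actually carry out the composition analysis of Step 1 case by case --- the one part of your plan that is currently missing.
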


\begin{proof}
First, we consider a mapping $$\phi:M_{\tl(B_n)}^{0+}\setminus\{E_0\}\to M_{\tl(B_n)}^{0-}$$ defined by
$\phi(E_0E_{i_1,j_1}E_{i_2,j_2}\cdots E_{i_p,j_p})=E_{i_1,j_1}'E_{i_2,j_2}\cdots E_{i_p,j_p}$.
Then this map is a bijection.
In order to compute $|M_{\tl(B_n)}^0|$,
it is enough to count the the number of elements in $M_{\tl(B_n)}^{0+}$.
%Thus we count the number of monomials in $M_{\tl(B_n)}^{0+}$.
For this, we consider the following procedure.

In the lattice plane, we plot the sequence of points $(i_1,j_1), (i_2,j_2), \ldots, (i_p,j_p)$ corresponding to
the monomial $E_0E_{i_1,j_1}E_{i_2,j_2}\cdots E_{i_p,j_p}$ in (\ref{monomial_tlB+}).
Set $\ell>0$ to be the largest $i$ such that $(i,0)$ belongs to the sequence of plotted points.
Then
the number of sequences of plotted points between $(\ell,0)$ and $(n,n)$ is the number of paths from $(\ell+1,0)$ and $(n,n)$
%(Here, a path moves up or to the right, not above the diagonal in the lattice)
.

Counting the number of these paths, we have $$
{2n-\ell-1\choose n}-{2n-\ell-1\choose n+1}=\frac{\ell+2}{n+1}{2n-\ell-1\choose n}.$$
Thus the number of monomials of the form $E_0E_{i_1,0}\cdots E_{i_p,j_p}$ (\ref{monomial_tlB+}) is $$
\sum_{\ell=1}^{n-1}\frac{\ell+2}{n+1}{2n-\ell-1\choose n}2^{\ell-1},$$
which is the same quantity as $\frac{1}{2}\left(\sum_{k=0}^{n-2}C(n,k)|\mathcal{P}_B(n,k)|+1\right)=\frac{n-1}{2}C_n$,
%as we have mentioned in (\ref{packet_B}) as well
as in \cite[Corollary 2.14]{FeinbergLee}.

Therefore we have $$\textstyle |M_{\tl(B_n)}^{0+}|=C_n+ \frac{n-1}{2}C_n=\frac{n+1}{2}C_n.$$
Then, the number of $\widehat{R}_{\tl(B_n)}$-standard monomials becomes $$\textstyle
|M_{\tl(A_{n-1})}|+1+2 |M_{\tl(B_n)}^{0+}\setminus\{E_0\}|=
C_n + 1+2\left( \frac{n+1}{2}C_n-1\right),$$
which gives exactly the number equal to $$\dim \tl(B_n)=(n+2)C_n-1$$ as mentioned in \cite[\S5]{Stembridge97} and \cite[\S7]{Fan97}.
Theorem \ref{cor-1} yields that $\widehat{R}_{\tl(B_n)}$ is a \GS basis for $\tl(B_n)$.
\end{proof}

%\noindent {\it Remark}. From the works in \cite{FeinbergLee, KimLeeOh}, we notice that
%$$\begin{aligned}
%&{\textstyle \frac{1}{2}\left(\sum_{k=0}^{n-2}C(n,k)|\mathcal{P}_B(n,k)|+1\right)= \sum_{k=0}^{n-2}C(n,k)|\mathcal{P}_D(n,k)|+1}\\
%&{\textstyle =2^{n-2}+\frac{n}{n+1}{n+1\choose n}2^{n-3}+\frac{n-1}{n+1}{n+2\choose n}2^{n-4}+\frac{n-2}{n+1}{n+3\choose n}2^{n-5}
%+\cdots+\frac{3}{n+1}{2n-2\choose n}2^{0} }
%\\& {\textstyle =\frac{n-1}{2}C_n.}
%\end{aligned}$$

\begin{ex} (1) We enumerate the $\widehat{R}_{\tl(B_3)}$-standard monomials containing $E_0$:
\begin{eqnarray*}
&E_0,\ E_0E_{1,0}, E_{1,0},\ E_0E_1, E_1',\ E_0E_{2,0}, E_{2,0},\ E_0E_{2,1}, E_{2,1}',\ E_0E_2, E_2',\\
&E_0E_{1,0}E_{2,0}, E_{1,0}E_{2,0}, E_0E_{1,0}E_{2,1}, E_{1,0}E_{2,1}, E_0E_{1,0}E_2, E_{1,0}E_2, E_0E_1E_2, E_1'E_2.
\end{eqnarray*}
%\begin{eqnarray*}&E_0,\ E_0E_{1,0}, E_{1,0},\ E_0E_1, E_1',\ E_0E_{2,0}, E_{2,0},\ E_0E_{2,1}, E_{2,1}',\ E_0E_2, E_2',\\
%&E_0E_{1,0}E_{2,0}, E_{1,0}E_{2,0}, E_0E_{1,0}E_{2,1}, E_{1,0}E_{2,1}, E_0E_{1,0}E_2, E_{1,0}E_2, E_0E_1E_2, E_1'E_2,\\
%&E_0E_{3,0}, E_{3,0}, E_0E_{3,1}, E_{3,1}', E_0E_{3,2}, E_{3,2}', E_0E_3, E_3',\\
%&E_0E_{1,0}E_{3,0}, E_{1,0}E_{3,0}, E_0E_{1,0}E_{3,1}, E_{1,0}E_{3,1}, \\
%&E_0E_{1,0}E_{3,2}, E_{1,0}E_{3,2}, E_0E_1E_{3,2}, E_1'E_{3,2}, E_0E_{1,0}E_3, E_{1,0}E_3, E_0E_1E_3, E_1'E_3,\\
%&E_0E_{2,0}E_{3,0}, E_{2,0}E_{3,0}, E_0E_{2,0}E_{3,1}, E_{2,0}E_{3,1}, E_0E_{2,0}E_{3,2}, E_{2,0}E_{3,2}, E_0E_{2,1}E_{3,2}, E_{2,1}'E_{3,2}, \\
%&E_0E_{2,0}E_3, E_{2,0}E_3, E_0E_{2,1}E_3, E_{2,1}'E_3, E_0E_2E_3, E_2'E_3,\\
%&E_0E_{1,0}E_{2,0}E_{3,0}, E_{1,0}E_{2,0}E_{3,0}, E_0E_{1,0}E_{2,0}E_{3,1}, E_{1,0}E_{2,0}E_{3,1}, \\
%&E_0E_{1,0}E_{2,0}E_{3,2}, E_{1,0}E_{2,0}E_{3,2}, E_0E_{1,0}E_{2,1}E_{3,2}, E_{1,0}E_{2,1}E_{3,2}, \\
%&E_0E_{1,0}E_{2,0}E_3, E_{1,0}E_{2,0}E_3, E_0E_{1,0}E_{2,1}E_3, E_{1,0}E_{2,1}E_3, E_0E_{1,0}E_2E_3, E_{1,0}E_2E_3, E_0E_1E_2E_3, E_1'E_2E_3.\end{eqnarray*}

(2) The product of two $\widehat{R}_{\tl(B_3)}$-standard monomials is a scalar multiple of a standard monomial.
%
%We have the multiplication table between the $\widehat{R}_{\tl(B_3)}$-standard monomials.
For instance, we multiply $E_0E_{1,0}E_{2,0}$ by $E_2$ from the left:
$$
E_2(E_0E_{1,0}E_{2,0})=E_0E_{2,0}E_{2,0}=E_0 E_0E_2 E_{1,0}=\delta E_0 E_{2,0}.
$$
Note that the second equality comes from the Lemma \ref{relation_tlB_Lemma}.
\end{ex}

\section{\GS bases for the Temperley-Lieb algebras of type $D_n$}

Now we consider $\tl(D_n)$ $(n\ge 4)$, the Temperley-Lieb algebra of type $D_n$,
which is the associative algebra over the complex field $\C$, generated by
$X=\{E_0, E_1,E_2, \ldots,E_{n-1}\}$ with defining relations:
\begin{eqnarray}\label{relation_tlD}
& E_i^2=\delta E_i &\mbox{ for } 0\le i\le n-1, \nonumber\\
& E_iE_j = E_jE_i &\mbox{ for } 1 < j+1<i\le n-1%\quad \mbox{(commutative relations)}
, \nonumber\\
R_{\tl(D_n)}:& E_iE_0 = E_0E_i &\mbox{ for } i\ne 2\\
& E_iE_jE_i=E_i &\mbox{ for } j=i\pm 1,\ i,j>0,  \nonumber\\
& E_iE_jE_i=E_i &\mbox{ for } \{i,j\}=\{0,2\},  \nonumber
\end{eqnarray} where $\delta\in\C$ is a parameter.
%Note that $\tl(D_n)$ is a quotient of the Hecke algebra $\h(D_n)$.

Take the degree-lexicographic monomial order $<$ with $$E_0<E_1<E_2<\cdots< E_{n-1}.$$
We write $E_{i,j}=E_iE_{i-1}\cdots E_{j+1}E_j$ for $i\ge j>0$, and let
$E_{i,0}=E_iE_{i-1}\cdots E_3E_2E_0$ for $i\ge 2$, and $E^{i,j}=E_iE_{i+1}\cdots E_j$ for $1\le i\le j$. %and $$E_{i,j}'=E_{i,0}E^{1,j}.$$
By convention, $E_{1,0}=E_0$ and $E_{i,i+1}=1$ %, $E^{i+1,i}=1$
for $i\ge 0$.

\begin{lem} \label{relation_tlD_Lemma}
The following relations hold in $\tl(D_n)$.
\begin{enumerate}[{\rm (a)}]\item For $i>2$, we have $$E_{i,0}E_i=E_{i-2,0}E_i.$$
\item For $i\ge 2$, we have $$E_0E_{i,0}=E_0E_{i,3}.$$
\item For $i\ge 2$, we have $$E_0E_1E_{i,0}=E_0E_1E_{i,3}.$$
\item For $i>j+1> 1$, we have $$
E_{i,0}E^{1,j}E_i=E_{i-2,0}E^{1,j}E_i.$$
\end{enumerate}
\end{lem}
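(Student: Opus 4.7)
The plan is to establish part (a) directly from the defining relations of $\tl(D_n)$ and then deduce (b), (c), and (d) from short calculations that either invoke (a) or mimic its pattern. The delicate point throughout is the distinguished role of $E_2$ as the unique generator that does not commute with $E_0$: every rewriting must be arranged so that an $E_0$ never has to cross an $E_2$ except via one of the explicit identities $E_0E_2E_0=E_0$ or $E_2E_0E_2=E_2$. The boundary values $i=2,3$, together with the conventions $E_{1,0}=E_0$ and $E_{i,i+1}=1$, also need a quick sanity check since several subchains become empty.

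For (a), I expand $E_{i,0}E_i=E_iE_{i-1}\cdots E_3E_2E_0E_i$ and use $E_iE_0=E_0E_i$ (valid for $i>2$) to move the final $E_0$ to the right, getting $E_iE_{i-1}\cdots E_3E_2E_iE_0$. Since $E_i$ commutes with each of $E_{i-2},\ldots,E_2$, I slide it leftward to meet $E_{i-1}$ and apply the braid $E_iE_{i-1}E_i=E_i$, yielding $E_iE_{i-2}\cdots E_2E_0$. Commuting this $E_i$ back to the right through the remaining factors (all of index $\le i-2$, or equal to $0$) gives $E_{i-2,0}E_i$. The case $i=3$ is degenerate but checks out directly: $E_{3,0}E_3=E_3E_2E_0E_3=E_3E_2E_3E_0=E_3E_0=E_0E_3=E_{1,0}E_3$.

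For (b), I split off $i=2$, where the single relation $E_0E_2E_0=E_0$ together with $E_{2,3}=1$ does the job. For $i\ge 3$, $E_0$ commutes with each of $E_i,E_{i-1},\ldots,E_3$, so sliding it to the right past them lands it immediately to the left of $E_2E_0$; a second application of $E_0E_2E_0=E_0$ eliminates that tail and leaves $E_0E_iE_{i-1}\cdots E_3=E_0E_{i,3}$. Part (c) is then immediate from (b) by one commutation: since $E_0E_1=E_1E_0$, I have $E_0E_1E_{i,0}=E_1E_0E_{i,0}=E_1E_0E_{i,3}=E_0E_1E_{i,3}$.

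Finally, (d) reduces to (a). The hypothesis $i>j+1>1$ forces $1\le j\le i-2$, so every factor of $E^{1,j}=E_1E_2\cdots E_j$ has index at most $i-2$ and therefore commutes with $E_i$. Hence $E_{i,0}E^{1,j}E_i=E_{i,0}E_iE^{1,j}$; applying (a) gives $E_{i-2,0}E_iE^{1,j}$, and commuting $E_i$ back through $E^{1,j}$ produces $E_{i-2,0}E^{1,j}E_i$, which is the desired identity. The hardest step in the whole plan is (a), where several commutations have to be chained together in a specific order so that the braid $E_iE_{i-1}E_i=E_i$ can be triggered; once (a) is in place, the remaining parts are essentially bookkeeping.
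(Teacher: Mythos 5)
Your proposal is correct and follows essentially the same route as the paper: each identity is verified by sliding a generator through factors it commutes with and then invoking $E_iE_{i-1}E_i=E_i$ or $E_0E_2E_0=E_0$. The only cosmetic differences are that you deduce (c) from (b) via $E_0E_1=E_1E_0$ and (d) from (a) by commuting $E_i$ past $E^{1,j}$, whereas the paper writes out each computation directly; both reductions are valid since the hypotheses of (d) force $i>2$.
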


\begin{proof}
(a) By the commutative relations and $E_iE_{i-1}E_i=E_i$,
we calculate that $$E_{i,0}E_i=(E_iE_{i-1}E_i)E_{i-2,0}=E_{i-2,0}E_i.$$

(b) It follows that $$
E_0E_{i,0}=E_{i,3}(E_0E_2E_0)=E_0E_{i,3}$$ from the commutative relations and $E_0E_2E_0=E_0$.

(c) In the same way as the previous relation,
$$E_0E_1E_{i,0}=E_1E_{i,3}(E_0E_2E_0)=E_0E_1E_{i,3}.$$

(d) We get that $$
E_{i,0}E^{1,j}E_i=(E_iE_{i-1}E_i)E_{i-2,0}E^{1,j}=E_{i-2,0}E^{1,j}E_i,$$
as desired.
\end{proof}

We let $\widehat{R}_{\tl(D_n)}$ be the set of defining relations (\ref{relation_tlD})
combined with (\ref{relation_tlA_GS}) and the relations in Lemma \ref{relation_tlD_Lemma}.
The set $M_{\tl(D_n)}$ is defined to be the collection of $\widehat{R}_{\tl(D_n)}$-standard monomials.
Among the monomials in $M_{\tl(D_n)}$, we consider the set of monomials
which are not $\widehat{R}_{\tl(A_{n-1})}$-standard and denote this set by $M_{\tl(D_n)}^0$.
That is, we take only $\widehat{R}_{\tl(D_n)}$-standard monomials which are not of the form (\ref{monomial_tlA}).
Thus we have

$$M_{\tl(D_n)}=M_{\tl(A_{n-1})}\amalg M_{\tl(D_n)}^0.$$
%Note that each monomial in $M_{\tl(B_n)}^0$ contains $E_0$.
The monomials in $M_{\tl(D_n)}^0$ are of the form
\begin{equation}\label{monomial_tlD0}
\varphi(E_{i_1,j_1})E_{i_2,j_2}\cdots E_{i_p,j_p}\quad (1\le p\le n-1)
\end{equation}
where $$\begin{aligned}
&1\le i_1<i_2<\cdots <i_p\le n-1,
%\quad 0\le j_1\le j_2\le \cdots \le j_p\le n-1,
\\ &i_1\ge j_1,\ i_2\ge j_2,\ \ldots,\ i_p\ge j_p,\ \mbox{ and }\\
&j_k=0\ (1\le k<p)\ \mbox{ implies } j_{k+1}\ge 1,\ \mbox{ and }\\
&j_k=1\ (1\le k<p)\ \mbox{ implies } j_{k+1}=0 \mbox{ or } j_{k+1}\ge 2,\ \mbox{ and }\\
&j_k>1\ (1\le k<p)\ \mbox{ implies } j_k<j_{k+1}\end{aligned}$$
where $$\varphi(E_{i_1,j_1}): =
\left\{
  \begin{array}{ll}
    E_{i_1,j_1} & \hbox{ if } j_1j_2=0, \\
    E_{i_1,j_1}'=E_{i_1,0}E^{1,j_1} & \hbox{ otherwise.}
  \end{array}
\right.$$
We can easily check that $M_{\tl(D_n)}^0$ is the set of $\widehat{R}_{\tl(D_n)}$-standard monomials
which are not $\widehat{R}_{\tl(A_{n-1})}$-standard.

We count $|M_{\tl(D_n)}^0|$ to obtain the following theorem.

\begin{thm}
The algebra $\tl(D_n)$ has a \GS basis $\widehat{R}_{\tl(D_n)}$ with respect to our monomial order $<$:
\begin{eqnarray}\label{relation_tlD_GS}
& E_i^2-\delta E_i &\mbox{ for } 0\le i\le n-1, \nonumber\\
& E_iE_j - E_jE_i &\mbox{ for } 1 < j+1<i\le n-1, \nonumber\\
& E_iE_0 - E_0E_i &\mbox{ for } i\ne 2, \nonumber\\
& E_iE_jE_i - E_i &\mbox{ for } \{i,j\}=\{0,2\}, \nonumber\\
\widehat{R}_{\tl(D_n)}: & E_{i,j}E_i-E_{i-2,j}E_i &\mbox{ for } i>j> 0, \nonumber\\
& E_{i,0}E_i-E_{i-2,0}E_i &\mbox{ for } i>2, \nonumber\\
& E_jE_{i,j}-E_jE_{i,j+2} &\mbox{ for } i>j>0, \nonumber\\
& E_0E_{i,0}-E_0E_{i,3} &\mbox{ for } i\ge 2, \nonumber\\
& E_0E_1E_{i,0}-E_0E_1E_{i,3} &\mbox{ for } i\ge 2, \nonumber\\
& E_{i,0}E^{1,j}E_i-E_{i-2,0}E^{1,j}E_i &\mbox{ for } i>j+1> 1. \nonumber
\end{eqnarray}
The cardinality of the set $M_{\tl(D_n)}$, i.e. the set of  $\widehat{R}_{\tl(D_n)}$-standard monomials, is

$$\textstyle \dim \tl(D_n)=\frac{n+3}{2}C_n-1.$$
\end{thm}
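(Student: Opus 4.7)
The plan is to follow the strategy used in Theorem \ref{MainThm}: exhibit $\widehat{R}_{\tl(D_n)}$ as a set of valid consequences of the defining relations, characterize the $\widehat{R}_{\tl(D_n)}$-standard monomials (already done in the paragraph preceding the statement), count $|M_{\tl(D_n)}|$, and compare the count with the known dimension $\frac{n+3}{2}C_n - 1$ of $\tl(D_n)$ recorded in \cite{Fan97, Stembridge97}. Since the $\widehat{R}_{\tl(D_n)}$-standard monomials span $\tl(D_n)$ as a vector space by the proposition following Lemma \ref{division}, matching the count with the dimension forces them to be a basis, and Theorem \ref{cor-1}(c)$\Rightarrow$(a) then yields that $\widehat{R}_{\tl(D_n)}$ is a \GS basis.

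For the first step, the first four families in (\ref{relation_tlD_GS}) are precisely the defining relations (\ref{relation_tlD}), the four families arising from Lemma \ref{relation_tlD_Lemma} have been established there, and the two remaining families $E_{i,j}E_i - E_{i-2,j}E_i$ and $E_jE_{i,j}-E_jE_{i,j+2}$ (for $i > j > 0$) are inherited from the type-$A$ subalgebra generated by $E_1,\ldots,E_{n-1}$ via the proposition on $\widehat{R}_{\tl(A_{n-1})}$. Since $M_{\tl(D_n)} = M_{\tl(A_{n-1})} \amalg M_{\tl(D_n)}^0$, the count reduces to showing $|M_{\tl(D_n)}^0| = \frac{n+1}{2}C_n - 1$.

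For this final counting step I would encode each monomial of the form (\ref{monomial_tlD0}) as a lattice path from $(0,0)$ to $(n,n)$ weakly below the diagonal, decorated with a binary choice encoding the action of $\varphi$. As in the proof of Theorem \ref{MainThm}, conditioning on the largest $\ell$ with $(\ell,0)$ on the path gives a factor $\binom{2n-\ell-1}{n} - \binom{2n-\ell-1}{n+1} = \frac{\ell+2}{n+1}\binom{2n-\ell-1}{n}$, weighted by a multiplicity that records the admissible patterns of $j_k \in \{0,1\}$ among the remaining steps. Summing the resulting expression over $\ell$ and adding $|M_{\tl(A_{n-1})}| = C_n$ should produce $\frac{n+3}{2}C_n - 1$, matching $\dim \tl(D_n)$.

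The principal obstacle I anticipate is the combinatorial case analysis: the $\varphi$-convention together with the rules $j_k = 0 \Rightarrow j_{k+1} \ge 1$ and $j_k = 1 \Rightarrow j_{k+1} = 0$ or $j_{k+1} \ge 2$ break the uniform structure that was available in the $B_n$ enumeration, so the count must be partitioned according to the pattern of zeros and ones among the $j_k$'s and summed with different weights. Matching the resulting piecewise sum to exactly $\frac{n+1}{2}C_n - 1$, and in particular accounting for the $-1$ correction, will require careful bookkeeping rather than a single closed-form summation; alternatively one could try to construct an explicit injection from $M_{\tl(D_n)}^0$ into $M_{\tl(B_n)}^0$ whose complement has controlled cardinality, reducing the count to the one already performed in the previous section.
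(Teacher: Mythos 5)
Your overall architecture --- check that the listed elements are consequences of the defining relations, invoke the spanning property of $S$-standard monomials, count $|M_{\tl(D_n)}|$, match it against $\dim\tl(D_n)=\frac{n+3}{2}C_n-1$ from Fan and Stembridge, and close with Theorem \ref{cor-1} --- is exactly the paper's. The only divergence is in the one step you flag as the principal obstacle. The paper does not attempt the weighted lattice-path summation over the admissible patterns of $0$'s and $1$'s among the $j_k$'s; it takes precisely the route you offer as a fallback: letting $r$ be the largest index with $j_r=0$ and resetting $j_s=0$ for all $s\le r$, it identifies $M_{\tl(D_n)}^0$ with the set of type-$B$ monomials of the form (\ref{monomial_tlB+}) with $p\ge 1$, so that $|M_{\tl(D_n)}^0|=\frac{n+1}{2}C_n-1$ is inherited from the count already performed in Theorem \ref{MainThm}, with no new summation and with the $-1$ accounted for by the excluded $p=0$ monomial $E_0$. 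So your plan is sound and essentially the paper's, but to complete it you should commit to the re-indexing bijection rather than the direct sum, which would otherwise force you to redo (and complicate) the type-$B$ enumeration; the bijection also absorbs the bookkeeping for $\varphi$, since the binary choice it encodes corresponds to the $E_0$-prefix versus primed forms already counted in the decomposition $M_{\tl(B_n)}^{0+}\amalg M_{\tl(B_n)}^{0-}$.
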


\begin{proof}
Let $r>1$ be the biggest such that $j_r=0$.
In order to count the number of monomials in $M_{\tl(D_n)}^0$,
we change the indices in (\ref{monomial_tlD0}) into
$j_s=0$ for $s\le r$. As we did for type $B$, we count the number of sequences of points $(i_1,j_1), \ldots, (i_p,j_p)$, which
is exactly the number of %nonempty semistandard tableaux having at most two columns with entries in $\{1,2,\ldots,n-1\}$
monomials of the form (\ref{monomial_tlB+}) with $p\ge 1$.
Thus, $$\textstyle |M_{\tl(D_n)}^0|=\frac{n+1}{2}C_n-1=\frac{1}{2}{2n\choose n}-1.$$
%as in the previous section.

Counting the $\widehat{R}_{\tl(D_n)}$-standard monomials by
$$
\textstyle
|M_{\tl(A_{n-1})}|+|M_{\tl(D_n)}^0|=
C_n + \frac{n+1}{2}C_n-1
$$
leads us exactly the number equal to $$\textstyle \dim \tl(D_n)=\frac{n+3}{2}C_n-1$$ as proved in %\cite[\S4]{Fan96}
\cite[\S6]{Fan97}
and \cite[\S10]{Stembridge97}.
By Theorem \ref{cor-1}, we conclude that $\widehat{R}_{\tl(D_n)}$ is a \GS basis for $\tl(D_n)$.
\end{proof}

\begin{ex} (1) The $\widehat{R}_{\tl(D_4)}$-standard monomials containing $E_0$ are as follows:
\begin{eqnarray*}%\begin{aligned}
&E_{1,0}=E_0, E_1'=E_0E_1, E_{2,0}=E_2E_0, E_{2,1}'=E_2E_0E_1, E_2'=E_2E_0E_1E_2,\\
&E_1E_{2,0}, E_0E_{2,1}, E_0E_2, E_1'E_2, E_{3,0}, E_{3,1}', E_{3,2}', E_3',
 E_1E_{3,0}, E_0E_{3,1}, E_0E_{3,2}, E_0E_3, \\
&E_1'E_{3,2}, E_1'E_3,
 E_{2,1}E_{3,0}, E_{2,0}E_{3,1}, E_{2,0}E_{3,2}, E_{2,0}E_3, E_{2,1}'E_{3,2}, E_{2,1}'E_3, E_2'E_3,\\
&E_0E_{2,1}E_{3,0}, E_1E_{2,0}E_{3,1}, E_1E_{2,0}E_{3,2}, E_1E_{2,0}E_3, %\\&
E_0E_{2,1}E_{3,2}, E_0E_{2,1}E_3, E_0E_2E_3, E_1'E_2E_3.
%\end{aligned}
\end{eqnarray*}

(2)
We multiply $E_0E_{2,1}E_{3,0}$ by $E_3$ from the left:
$$
E_3(E_0E_{2,1}E_{3,0})
=E_0(E_{3,1}E_3)E_{2,0}
=E_0 E_1E_{3,0}
=E_0E_1E_3
=E_1'E_3.
$$
\end{ex}

\bigskip

%\proof[Acknowledgements]
%\vskip 10mm
%\bibliographystyle{amsalpha}

\end{document}